\DeclareMathOperator{\aut}{Aut}
\DeclareMathOperator{\surf}{Surf}
\newtheorem{theorem}{Theorem}[section]
\theoremstyle{definition}
\newtheorem{definition}[theorem]{Definition}
\theoremstyle{remark}
\newtheorem{remark}[theorem]{Remark}
\numberwithin{equation}{section}
\def\ol#1{\overline{#1}}
\def\ul#1{\underline{#1}}
\def\Z{\mathbb Z}
\def\Q{\mathbb Q}
\def\R{\mathbb R}
\def\S{\mathbb S}
\def\FS{\mathfrak S}
\def\M{\mathcal M}
\definecolor{light-gray}{gray}{0.80}
\definecolor{ovdarkgreen}{RGB}{106, 119, 42}
\newcommand\blfootnote[1]{%
  \begingroup
  \renewcommand\thefootnote{}\footnote{#1}%
  \addtocounter{footnote}{-1}%
  \endgroup
}
\begin{document}

\title[Combinatorial moduli of bordered surfaces]{A combinatorial model for the moduli of bordered Riemann surfaces and a compactification}

\author[R. Kaufmann]{Ralph Kaufmann}
\address{Purdue University, Department of Mathematics, 150 N. University Street
West Lafayette, IN 47907-2067 - USA}
\email{rkaufman@math.purdue.edu}

\author[J. Z\'u\~niga]{Javier Z\'u\~niga$^*$}
\address{Departmento de Econom\'{i}a\\Universidad del Pac\'{i}fico\\
Av. Salaverry 2020, Jes\'{u}s Mar\'{i}a, Lima 11 - Per\'{u}}
\email{zuniga\underline{ }jj@up.edu.pe}

\subjclass[2020]{Primary 32G15, 57Q15; Secondary 57R18}
\date{\today}
\keywords{Bordered Riemann surfaces, Ribbon graphs, Moduli spaces}

\begin{abstract}
We construct a combinatorial moduli space closely related to the KSV-compactification of the moduli space of bordered marked Riemann surfaces. The open part arises from symmetric metric ribbon graphs. The compactification is obtained by considering sequences of non contractible subgraphs. This leads to a partial real blow-up of rational cells that together form a compact orbi-cell space. For genus zero the constructed space gives an orbi-cell decomposition of the corresponding analytic moduli space decorated by real numbers and a compactification of this space. In higher genus the relation is more involved, as we briefly explain. The spaces we construct are of interest in their own right as they are constructed directly from an interesting class of graphs.
\end{abstract}

\maketitle


\section{Introduction}

Let\blfootnote{$^*$Corresponding author.} $\M^{dec}_{g,n}$ denote the (decorated) moduli space of Riemann surfaces of genus $g$ and $n$ marked points together with decorations, i.e., a choice of a positive real number for each marked point so that they add up to one. Let $\M^{comb}_{g,n}$ denote the (combinatorial) moduli space of metric ribbon graphs (or fat graphs) of genus $g$ with $n$ boundary cycles so that the sum of lengths of all edges adds up to one half. The combinatorial moduli space has a natural orbicell decomposition. A known result of Harer, Mumford, Thurston \cite{Harer}, Penner \cite{pen87}, Bowdich and Epstein \cite{boweps}, states that the decorated and combinatorial moduli spaces are homeomorphic. This homeomorphism can be realized by quadratic differentials with quadratic residues (the ribbon graph model) or by hyperbolic metrics with horocycles (the fat graph model). As a consequence, the space $\M^{dec}_{g,n}$ has an orbicell decomposition given by $\M^{comb}_{g,n}$. 

Konsevich's proof of Witten's conjecture in \cite{kon:itma} requires a  compactification of the combinatorial moduli space emulating the Deligne-Mumford compactification of the moduli space of Riemann surfaces. This compactification losses a lot of information (it is more coarse) when the surface degenerates in certain ways, which is codified topologically by the so called \emph{genus defect}. The details of this compactification together with a finer compactification recovering the missing data was given by Looijenga in \cite{loo}. A further extension of Looijenga's compactification was given in \cite{zun2015} using the real compactification proposed by \cite{KSV1995}. 

For bordered Riemann surfaces a few combinatorial models have been proposed including \cite{godin2007} and \cite{cos:dpv} using BW graphs, see \cite{egas2015} for a summary. These models are homotopical in nature. In this article, we propose a combinatorial model based on ribbon graphs and the double construction. The advantage is that our model identifies the homeomorphism type of a concrete space, but it can be related directly to the moduli of bordered surfaces only for genus zero with one boundary component and some special cases in genus one. We also discuss some relations between out model and the model in \cite{cos:dpv}, and also a connection with the KSV compactification of the moduli space.
This type of space is also of interest in terms of open/closed field theory and actions \cite{KP,ochoch}.

This article is structured as follows. In section 2, we review the construction of polytopes and their truncation. In section 3, we introduce ribbon graphs and in section 4, we define their moduli spaces. In section 5, we define the combinatorial blow-up of the moduli spaces in the previous section. In section 6, we review the theory of Strebel-Jenkins differentials on Riemann surfaces. In section 7, we define several moduli spaces of interest: decorated moduli spaces, moduli spaces of symmetric surfaces and different compactifications. We also define the space of interest in this article: the moduli space of symmetric ribbon graphs. In section 8, we give a few examples of these new spaces. In section 9, we establish some connections with known moduli spaces and BW graphs. 

\section{Polytopes}

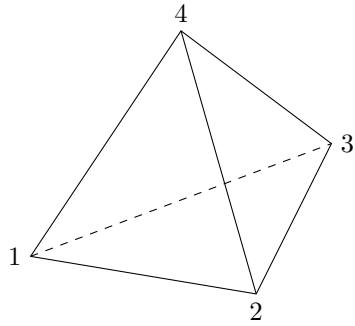
\begin{figure}
\centering
\begin{tikzpicture}
\draw (0,0) node[left]{$1$} -- (2,3) node[above]{$4$} -- (4,1.5) node[right]{$3$} -- (3,-0.5) node[below]{$2$} -- (0,0);
\draw (2,3) -- (3,-0.5);
\draw[dashed] (0,0) -- (4,1.5);
\end{tikzpicture}
\caption{The simplex $\Delta_S$ where $S = \{ 1,2,3,4 \}$.}
\label{tetrah}
\end{figure}

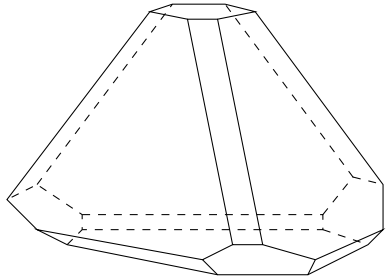
\begin{figure}
\centering
\begin{tikzpicture}[scale=0.2]

\draw (15,2) -- (17,2) -- (20.5,1) -- (20,0) -- (14,0) -- (13,1) -- (15,2) -- (12,17) -- (14,17) -- (17,2);
\draw (12,17) -- (9.5,17.5) -- (11,18) -- (14.5,18) -- (16.5,17.5) -- (14,17);
\draw (14,0) -- (4,2) -- (2,3) -- (13,1);
\draw (20,0) -- (24,2) -- (25,3) -- (20.5,1);
\draw (2,3) -- (0,5) -- (9.5,17.5);
\draw (25,3) -- (25,6) -- (16.5,17.5);

\draw[dashed] (4,2) -- (5,3) -- (5,4) -- (2,6) -- (0,5);
\draw[dashed] (2,6) -- (11,18);
\draw[dashed] (5,4) -- (21,4) -- (21,3) -- (5,3);
\draw[dashed] (21,4) -- (23,6.5) -- (14.5,18);
\draw[dashed] (21,3) -- (24,2);
\draw[dashed] (23,6.5) -- (25,6);

\end{tikzpicture}
\caption{The permutahedron $P_S$ where $S = \{ 1,2,3,4 \}$.}
\label{permutafour}
\end{figure}

Given a finite set $S$ we denote by $\Delta_S$ the standard simplex on $S$. Figure~\ref{tetrah} shows the tetrahedron. In this case if $\Delta_S$ can be given coordinates so that $x_1+x_2+x_3+x_4=1$ where $x_1 \ge 0$. Notice that as $x_1, x_3 \to 0$, we obtain the interval $\Delta_{3,4}$. For a set $S$ we denote by $P_S$ the permutohedron on $S$ which can be defined as a \emph{truncation} or \emph{blow-up} of $\Delta_S$ along all faces in increasing dimension. When $S=\{ 1,2,3,4 \}$, Figure~\ref{permutafour} shows $P_S$. The faces are obtained by considering proper, non-empty sequences of subsets of $S$. For example, the sequence $\{ 1,2,3,4 \} \supset \{ 1,3 \}$ gives the rectangle $P_{\{2,4 \}} \times P_{\{1,3 \}}$. In the previous example, as $x_1,x_3 \to 0$, the rectangle is obtained by blowing-up $\{ 1,3 \}^c = \{ 2,4 \}$.

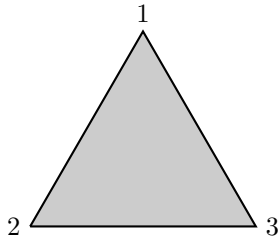
\begin{figure}
\centering
\begin{tikzpicture}[scale=1.5]
\draw[thick, fill=light-gray] (-1,0) node[left] {2} -- (1,0) node[right] {3} -- (0,1.73) node[above] {1} -- (-1,0);
\end{tikzpicture}
\caption{The simplex $\Delta_S$ where $S = \{ 1,2,3 \}$.}
\label{triangle}
\end{figure}

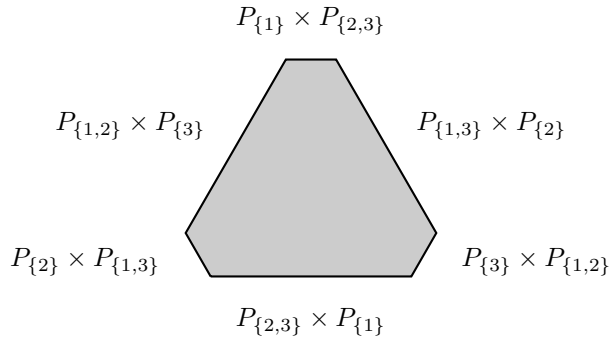
\begin{figure}
\centering
\begin{tikzpicture}[scale=2]
\draw[thick, fill=light-gray] (-0.667,0) -- (-0.833,0.289) -- (-0.167,1.443) -- (0.167,1.443) -- (0.833,0.289) -- (0.667,0) --  (-0.667,0);
\node at (0,1.7) {$P_{\{1\}} \times P_{\{2,3\}}$};
\node at (-1.5,0.1) {$P_{\{2\}} \times P_{\{1,3\}}$};
\node at (1.5,0.1) {$P_{\{3\}} \times P_{\{1,2\}}$};
\node at (0,-0.3) {$P_{\{2,3\}} \times P_{\{1\}}$};
\node at (-1.2,1) {$P_{\{1,2\}} \times P_{\{3\}}$};
\node at (1.2,1) {$P_{\{1,3\}} \times P_{\{2\}}$};
\end{tikzpicture}
\caption{The permutahedron $P_S$ where $S = \{ 1,2,3 \}$.}
\label{permutathree}
\end{figure}

From the simplex in Figure~\ref{triangle}, and using that labeling, we can obtain the permutohedron $P_S$ in Figure~\ref{permutathree} together with all faces.

\begin{figure}
\centering
\begin{tikzpicture}[scale=0.2]

\draw (12,16) -- (9.5,16) -- (8.5,17.5) -- (11,19) -- (14,18.5) -- (13.5,17) -- (12,16) -- (18,0) -- (19.5,1) -- (13.5,17);
\draw (8.5,17.5) -- (0,3.5) -- (1,2) -- (9.5,16);
\draw (1,2) -- (18,0);
\draw (19.5,1) -- (25,8.5) -- (14,18.5); 

\draw[dashed] (0,3.5) -- (22,9) -- (25,8.5);
\draw[dashed] (22,9) -- (11,19);

\end{tikzpicture}
\caption{An intermediate polytope.}
\label{qbuss}
\end{figure}
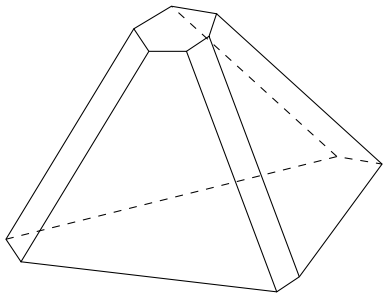

It is possible to obtain other polytopes (nestohedra) by blowing-up (truncating) only certain lower dimensional faces of a simplex. One way to indicate which faces to blow-up in $\Delta_S$ is by providing a subset $B \subset 2^S \setminus \{\varnothing\}$. 

\begin{definition}
For a given set $S$, let $N_B$ be the result of blowing-up $\Delta_S$ along the faces determined by $B$ and increasing by dimension.  
\end{definition}

With the definition above, $N_\varnothing = \Delta_S$ and $N_{2^S \setminus \{ \varnothing \}} = P_S$. For an intermediate example, if we consider $S=\{1,2,3,4 \}$ and \[ B = \{ \{4\}, \{1,4\}, \{2,4\}, \{3,4\} \} \] we obtain the polytope in Figure~\ref{qbuss} by blowing up first the top vertex of the tetrahedron in Figure~\ref{tetrah} and then the corresponding three edges.

\begin{remark}
It is possible to use the language of hypegraph polytopes as in \cite{coi2019} to deal with the polytopes in this sections.
\end{remark}

\section{Ribbon graphs}

\begin{definition}
A \emph{ribbon graph} $\Gamma$ is a connected graph with vertices of valence at least three together with a subdivision of edges into half-edges, also called \emph{edge refinement}, and a cyclic order on the set of incident half-edges to a vertex. 
\end{definition}

If $H$ is the set of half-edges and $v$ is a vertex of $\Gamma$, let $H_v$ be the set of half-edges adjacent to this vertex. A cyclic ordering at a vertex $v$ is an ordering of $H_v$ up to cyclic permutation. Once a cyclic ordering of $H_v$ is chosen, a cyclic permutation of $H_v$ is defined (an element of $\FS_{H_v}$): it moves a half-edge to the next in the cyclic order. Define by $\sigma_0$ the element of $\FS_H$ (the group of permutations acting on $H$)  which is the product of all the cyclic permutations at every vertex. Also, let $\sigma_1$ be the involution that interchanges the two half-edges on each edge of $\Gamma$. This combinatorial data completely defines the ribbon graph. 

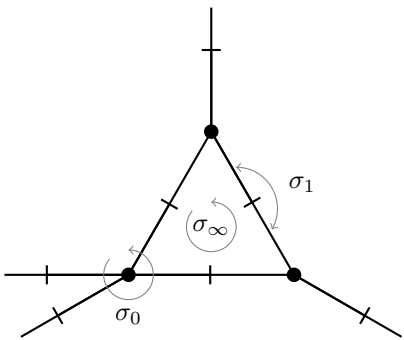
\begin{figure} 
	\begin{center}
		\begin{tikzpicture}[scale=2.2]
			\draw[thick,-|] (1,0) -- (1+.65/3*2,-0.75/3);
			\draw[thick,-|] (0,0) -- (0.5,0);
			\draw[thick,-|] (0,0) -- (-0.5,0);
			\draw[thick,-|] (0,0) -- (0.25,0.433);
			\draw[thick,-|] (0.5,0.866) -- (0.75,0.433);
			\draw[thick,-|] (0.5,0.866) -- (0.5,0.866+0.5);
			\draw[thick,-|] (0,0) -- (-.65/3*2,-0.75/3);

			\draw[thick] (1,0) -- (0,0) -- (0.5,0.866) -- (1,0);
			\draw[thick] (1,0) -- (1.65,-0.75/2);
			\draw[thick] (0.5,0.866) -- (0.5,0.866+0.75);
			\draw[thick] (0,0) -- (-0.65,-0.75/2);
			\draw[thick] (0,0) -- (-0.75,0);

			\fill (0,0) circle (1.25pt);
			\fill (1,0) circle (1.25pt);
			\fill (0.5,0.866) circle (1.25pt);
			
			\draw[<-, gray] (0,0.15) arc (90:-220:0.15);
			\node at (0,-0.25) {$\sigma_0$};
			\draw[<-, gray] (0.5,0.44) arc (90:-220:0.15);
			\node at (0.5,0.866/3) {$\sigma_\infty$};
			\draw[<->, gray] (0.65,0.65) arc (90:-30:0.25);
			\node at (1.05,0.55) {$\sigma_1$};
	\end{tikzpicture} \end{center}
	\caption[Combinatorial data]{An example of the permutations that define a ribbon graph around a boundary cycle.} \label{permutations}
\end{figure}

To be more precise, a ribbon graph  $\Gamma$ can be build out of combinatorial data in the following way. Let $H$ be a finite set of even cardinality. Let $\sigma_1 \in \FS_H$ be an involution without fixed points and $\sigma_0 \in \FS_{H}$ be such that $\sigma_0$ is a product of cyclic permutations with disjoint support. A vertex of $\Gamma$ is then given as an orbit of $\sigma_0$, while an edge is an orbit of $\sigma_1$. The set of vertices can be identified with $V(\Gamma)=H/\sigma_0$ and the set of edges with $E(\Gamma)=H/\sigma_1$. The graph thus obtained is required to be connected. 

Let $\sigma_\infty = \sigma_0^{-1} \sigma_1 \in \FS_{H}$. The set of \emph{cusps} is defined as $C(\Gamma)= H/\sigma_\infty$. The half-edges in the orbit of a cusp form a cyclically ordered set of half-edges called a \emph{boundary cycle}. Given a boundary cycle, all edges and vertices that are associated to those half-edges via $\sigma_0$ or $\sigma_1$ form a graph called a \emph{boundary subgraph}. For example, in Figure~\ref{permutations} the boundary cycle represented in the middle (and corresponding to three half edges) forms a boundary subgraph with three edges and three vertices (the middle triangle as a subgraph). Notice also that knowing $\sigma_1$ and $\sigma_\infty$ completely determines 
the ribbon graph structure since $\sigma_0= \sigma_1 \sigma_\infty^{-1}$.

An \emph{orientation of an edge} can be defined as an order on its corresponding half edges 
and we can use the notation $\vec{e} = \overrightarrow{h \sigma_1(h)}$ where $h$ is a half-edge. The involution $\sigma_1$ switches the orientation of an edge.

A \emph{loop} is an edge incident to only one vertex. A \emph{tree} is a connected graph $T$ with trivial $\bar{H}_*(T)$. 

\begin{definition}
An \emph{isomorphism} of ribbon graphs is a graph isomorphism of the edge refinement preserving the cyclic orders on each vertex. 
\end{definition}

Therefore, two graphs $\Gamma$, $\Gamma'$ are isomorphic when there is a bijection $\eta: H \to H'$ between the set of half-edges of these two graphs that commutes with $\sigma_0$, $\sigma_0'$ and $\sigma_1$, $\sigma_1'$. In particular, the boundary cycles are preserved, \emph{i.e.}, $\eta$ also commutes with $\sigma_\infty$, $\sigma_\infty'$.

\begin{figure}
	\centering
	\begin{tikzpicture}[scale=1.8]
		\draw[thick] (0,0) circle (0.5);
		\draw[fill=black] (-0.5,0) circle (1.2pt);
		\draw[fill=black] (0.5,0) circle (1.2pt);
		\draw[thick] (-0.5,0) -- (0.5,0);
		
		\draw[->, gray] (0.55,-0.15) arc [radius=0.15, start angle=-70, end angle= 250];
		\draw[->, gray] (-0.45,-0.15) arc [radius=0.15, start angle=-70, end angle= 250];
		
		\draw[thick] (0,0.45) -- (0,0.55);
		\draw[thick] (0,-0.05) -- (0,0.05);
		\draw[thick] (0,-0.55) -- (0,-0.45);
		
		\node at (0,-1) {$\theta$ graph};
	\end{tikzpicture} \hspace{1cm}
	\begin{tikzpicture}[scale=1.8]
		\draw[thick] (0,0) circle (0.5);
		\draw[thick] (-0.5,0) to [out=15, in=215] (0.12,0.40);
		\draw[thick] (0.32,0.48) to [out=30 , in=90] (0.75,0.28) to [out=270, in=25] (0.5,0);
		\draw[fill=black] (-0.5,0) circle (1.2pt);
		\draw[fill=black] (0.5,0) circle (1.2pt);
		\node at (0,-1) {twisted $\theta$};
	\end{tikzpicture}
	\caption{Two non-isomorphic ribbon graphs of different topological types.}
	\label{rgraphs}
\end{figure}
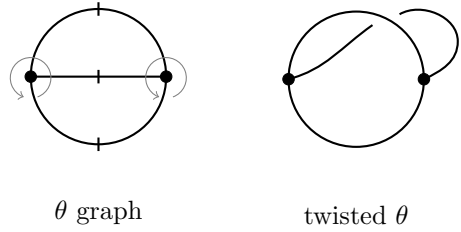

Let $\aut(\Gamma)$ denote the automorphism group of the edge refinement as a usual graph. The automorphism group of a ribbon graph, denoted by $\aut^{rg}(\Gamma)$, is the group of automorphisms of the edge refinement that also preserve the cyclic order on every vertex. Figure~\ref{rgraphs} shows two isomorphic graphs that are not isomorphic as ribbon graphs.

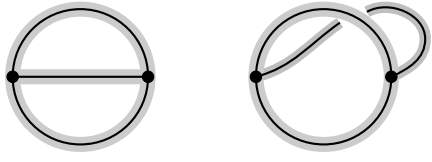
\begin{figure}
\centering
\begin{tikzpicture}[scale=1.8]

\draw[line width=0.2cm, color=light-gray] (0,0) circle (0.5);
\draw[line width=0.2cm, color=light-gray] (-0.5,0) -- (0.5,0);

\draw[thick] (0,0) circle (0.5);
\draw[fill=black] (-0.5,0) circle (1.2pt);
\draw[fill=black] (0.5,0) circle (1.2pt);
\draw[thick] (-0.5,0) -- (0.5,0);
\end{tikzpicture} \hspace{1cm}
\begin{tikzpicture}[scale=1.8]
\draw[line width=0.15cm, color=gray!50] (-0.5,0) to [out=15, in=215] (0.12,0.40);
\draw[line width=0.15cm, color=gray!50] (0.32,0.48) to [out=30 , in=90] (0.75,0.28) to [out=270, in=25] (0.5,0);
\draw[line width=0.2cm, color=light-gray] (0,0) circle (0.5);

\draw[thick] (0,0) circle (0.5);
\draw[thick] (-0.5,0) to [out=15, in=215] (0.12,0.40);
\draw[thick] (0.32,0.48) to [out=30 , in=90] (0.75,0.28) to [out=270, in=25] (0.5,0);
\draw[fill=black] (-0.5,0) circle (1.2pt);
\draw[fill=black] (0.5,0) circle (1.2pt);
\end{tikzpicture}
\caption{Two surfaces obtained from thickening ribbon graphs.}
\label{thickening}
\end{figure}

\begin{figure}
\centering
\begin{tikzpicture}[scale=1]
\draw[thick] (0,0) circle (0.5);
\draw[thick] (-0.5,0) -- (0.5,0);
\draw[fill=black] (-0.5,0) circle (2pt);
\draw[fill=black] (0.5,0) circle (2pt);

\draw[thick,xshift=2.5cm] (0,0) to [out=135, in=90] (-1,0) to [out=270,in=225] (0,0);
\draw[thick,xshift=3cm,rotate=180] (0,0) to [out=135, in=90] (-1,0) to [out=270,in=225] (0,0);
\draw[thick,xshift=2.5cm] (0,0) -- (0.5,0);
\draw[fill=black,thick,xshift=2.5cm] (0,0) circle (2pt);
\draw[fill=black,thick,xshift=2.5cm] (0.5,0) circle (2pt);

\draw[thick,xshift=6cm] (0,0) to [out=135, in=90] (-1,0) to [out=270,in=225] (0,0);
\draw[thick,xshift=6cm,rotate=180] (0,0) to [out=135, in=90] (-1,0) to [out=270,in=225] (0,0);
\draw[fill=black,thick,xshift=6cm] (0,0) circle (2pt);
\end{tikzpicture}
\caption{Non-isomorphic ribbon graphs of type $(0,3)$.}
\label{threegraphs}
\end{figure}

\begin{figure}
\centering
\begin{tikzpicture}[scale=1]
\draw[thick] (0,0) circle (0.5);
\draw[thick] (-0.5,0) to [out=15, in=215] (0.12,0.40);
\draw[thick] (0.32,0.48) to [out=30 , in=90] (0.75,0.28) to [out=270, in=25] (0.5,0);
\draw[fill=black] (-0.5,0) circle (2pt);
\draw[fill=black] (0.5,0) circle (2pt);
\end{tikzpicture}\qquad
\begin{tikzpicture}[scale=1]
\draw [thick] plot [smooth, tension=1] coordinates {(0,0) (0.35,0.8) (0.9,0.9) (0.8,0.35) (0,0)};
\draw [thick] plot [smooth, tension=0.5] coordinates {(0,0) (0.4,0.4) (0.75,0.45)};
\draw [thick] plot [smooth, tension=0.5] coordinates {(0,0) (0.2,-0.1) (0.4,-0.15) (0.7,-0.1) (1,0.15) (1.05,0.25) (1,0.4)};
\draw[fill=black] (0,0) circle (2pt);
\end{tikzpicture}
\caption{Non-isomorphic ribbon graphs of type $(1,1)$.}
\label{twographs}
\end{figure}
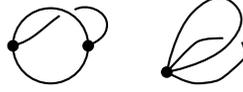

When replacing the edges by \emph{ribbons} and gluing them according to the cyclic ordering, one obtains an orientable surface of genus $g$ with $n$ holes as in Figure~\ref{thickening}. Each hole can be retracted to a boundary subgraph. Since the surface retracts to the graph, its Euler characteristic is $\chi = 2-2g-n$. Figure~\ref{threegraphs} shows three non-isomorphic graphs of type $(0,3)$: the theta graph, the double noose and the figure eight respectively. All ribbon graphs of type $(1,1)$ are shown in Figure~\ref{twographs}: The twisted theta graph and the twisted figure eight. 

Analogously to a labeling of the marked points on a surface, the boundary cycles or cusps of a ribbon graph can be \emph{labeled}. 

\begin{definition}
For a labeled ribbon graph, let $\aut^{rg}_\partial(\Gamma)$ denote the ribbon graph automorphism group preserving the labels. 
\end{definition}

It can be shown that up to isomorphism, there is exactly one labeled theta graph and three labeled double nooses. Since the twisted theta graph has only one boundary cycle, there is only one twisted labeled theta graph up to isomorphism. From now on, we assume that all ribbon graphs have labeled boundary cycles. 

A metric on a ribbon graph is a function $l : E(\Gamma) \to \R_+$. A metric can be represented by a point in $\R_+^{|E(\Gamma)|}$. A metric is \emph{unital} when the sum of the lengths of the boundary cycles is equal to one. This is equivalent to requiring that the sum of all edges equals one half since each edge is counted twice in the sum of the lengths of boundary cycles. A unital metric can be then represented by a point in $int ( \Delta_{E(\Gamma)})$.

\begin{figure}
	\centering
	\includegraphics{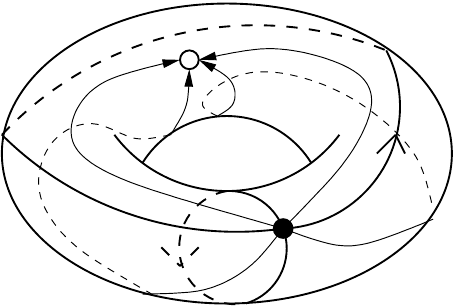}
	\caption{Surface obtained from a metric twisted figure eight.}
	\label{torussurface}
\end{figure}

To every metric ribbon graph $\Gamma$ we associate an oriented punctured surface $\surf(\Gamma)$ as follows. To each oriented edge $\vec{e} = \overrightarrow{h \sigma_1(h)}$ we attach an oriented triangle $K_{\vec{e}}=C |\vec{e}|$, 
where $|\vec{e}|$ is a segment of length $l(e)$ and $C$ refers to the cone construction. 
The base of $K_{\vec{e}}$ is identified with the base of $K_{\sigma_1(\vec{e})}$. 
Next we paste the right-hand edge of $K_{\vec{e}}$ with the left-hand edge of $K_{\sigma_\infty(\vec{e})}$. All vertices opposite to the bases of all triangles in the same boundary cycle are identified. Such points are in one to one correspondence with the orbits of $\sigma_\infty$; which is why we call them cusps. Gluing triangles is done in a way compatible with the orientation of each triangle, and thus, this punctured surface is triangularized. Let $v'=|V(\Gamma)|, e'=|E(\Gamma)|$ and $n'$ denote the number of vertices, edges and cusps, respectively. It is easy to shows that there are $2e'$ faces, $3e'$ edges and $v'+n'$ points. The surface $\surf(\Gamma)$ has genus $g=(2-v'+e'-n')/2$. This surface comes with a natural orientation given by the tiles since they are naturally oriented and their orientations match each other because of the way we have glued them. Figure~\ref{torussurface} shows the surface obtained from a metric twisted figure eight. The meridian and longitude joint at the dark point in the front of the torus and form the metric twisted figure eight. There are four cones based at the ribbon graph, their walls run from the dark point in the front of the torus to the hollow point at the back.

\section{Combinatorial moduli space}

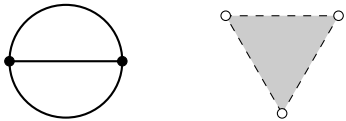
\begin{figure}
\centering
\begin{tikzpicture}[scale=1.5]
\draw[thick] (0,0) circle (0.5);
\draw[fill=black] (-0.5,0) circle (1.2pt);
\draw[fill=black] (0.5,0) circle (1.2pt);
\draw[thick] (-0.5,0) -- (0.5,0);
\end{tikzpicture} \hspace{1cm}
\begin{tikzpicture}[scale=1.5]

\draw[dashed, fill=light-gray] (0,0) -- (-0.5,0.866) -- (0.5, 0.866) -- (0,0);

\draw[fill=white] (0,0) circle (1.2pt);
\draw[fill=white] (-0.5,0.866) circle (1.2pt);
\draw[fill=white] (0.5,0.866) circle (1.2pt);

\end{tikzpicture}
\caption{Rational cell of the theta graph.}
\label{rationalthetacell}
\end{figure}

\begin{definition}
For a given ribbon graph $\Gamma$, its associated \emph{rational cell} is the quotient space $int \, (\Delta_{E(\Gamma)}) / \aut^{rg}_\partial(\Gamma)$ where $int \, (\Delta_{E(\Gamma)})$ is the space of unital metrics of $\Gamma$ and the action is by given by permutation of edges.
\end{definition}

 Figure~\ref{rationalthetacell} shows the theta graph and its corresponding rational cell. The automorphism group is trivial in this case. 

The contraction of an edge that is not a cycle results in a new ribbon graph (with induced cyclic orders on each vertex) of the same topological type. This process is called \emph{edge collapse}. The process can be generalized to the collapse of any disjoint union of subtrees. The edge collapse can be used to glue rational cells. For example, the collapse of one edge of the theta graph or the middle edge in the double noose results in the figure eight graph. This induces a gluing of a higher dimensional rational cell to a lower dimensional one. The orbispace of metric ribbon graphs of a fixed topological type $(g,n)$ up to isomorphism is the combinatorial moduli space $\M^{comb}_{g,n}$ obtained by gluing all rational cells of type $(g,n)$. The orbicell structure is studied in detail in \cite{mupe}.

\begin{figure}
\centering
\begin{tikzpicture}[scale=2.2]

\draw[thick, dashed, fill=light-gray] (-1,0) -- (0,1.732) -- (1,0) -- (-1,0);
\draw[thick] (0,0) -- (-0.5,0.866) -- (0.5,0.866) -- (0,0);

\draw[fill=white] (0,0) circle (1.2pt);
\draw[fill=white] (-1,0) circle (1.2pt);
\draw[fill=white] (1,0) circle (1.2pt);
\draw[fill=white] (-0.5,0.866) circle (1.2pt);
\draw[fill=white] (0.5,0.866) circle (1.2pt);
\draw[fill=white] (0,1.732) circle (1.2pt);

\end{tikzpicture}
\caption{The orbispace $\M^{comb}_{0,3}$.}
\label{combmodulitheta}
\end{figure}

Figure~\ref{combmodulitheta} shows the combinatorial moduli space $\M^{comb}_{0,3}$. The two-dimensional rational cell in the middle corresponds with the theta graph. The other three two-dimensional rational cells correspond with the three non-isomorphic ways to label the boundary cycles of the double noose graph. The three one-dimensional rational cells correspond with the three non-isomorphic ways to label the boundary cycles of the figure eight graph.

\begin{figure}
\centering
\begin{tikzpicture}[scale=2.2]

\draw[thick, dashed, fill=light-gray] (-1,0) -- (0,1.732) -- (1,0) -- (-1,0);

\fill[color=gray!60] (-0.5,0.866) -- (0,1.732) -- (0.5,0.866) -- (0,0.577);

\draw[thick] (0,1.732) -- (-0.5,0.866) -- (0,0.577);
\draw[thick, dashed] (0.5,0.866) -- (0,0.577) -- (0,0);

\draw[fill=white] (-1,0) circle (1.2pt);
\draw[fill=white] (1,0) circle (1.2pt);

\draw[fill=white] (0,1.732) circle (1.2pt);

\draw[->] (0.06,0.45) arc [radius=0.165, start angle=-70, end angle= 250];
\node at (0.4,0.4) {$\Z/ 3\Z$};
\draw[thick, ->] (0.13,1.6) arc [radius=0.165, start angle=-40, end angle= 220];

\end{tikzpicture}
\caption{The orbispace $\M^{comb}_{1,1}$.}
\label{ellipticommoduli}
\end{figure}

Figure~\ref{ellipticommoduli} shows the combinatorial moduli space $\M^{comb}_{1,1}$. There is only one two-dimensional rational cell corresponding with the twisted theta graph and depicted as the whole triangle. The automorphism group of the twisted theta graph is $\Z/ 3\Z$. The action of the automorphism group on the rational cell is by rotations, thus the shaded region represents the fundamental domain. This actions glues the lower left edge of the fundamental domain to the lower right edge. The top half of the left edge of the fundamental domain corresponds with the twisted figure eight. The automorphism group of the twisted figure eight is $\Z / 2\Z$ and its action glues the top half of the left edge to top half of the right edge. As a result, $\M^{comb}_{1,1}$ is homeomorphic to a sphere minus a point.

\section{Combinatorial blow-up}

Given a ribbon graph $\Gamma$, define $A \subset 2^{E(\Gamma)} \setminus \{ \varnothing \}$ as the family of subsets of edges corresponding with subgraphs of $\Gamma$ that are not a union of trees. Let $B$ be the family defined by taking the complement of every element in $A$. 

\begin{definition}
Since the action of $\aut^{rg}_\partial(\Gamma)$ can be extended to the boundary of $N_B$, we define the \emph{compact rational cell} associated to $\Gamma$ as $N_B / \aut^{rg}_\partial(\Gamma)$. Gluing compact rational cells along the edge collapse of the union of trees, we obtain the compact combinatorial moduli space \[ \underline{\M}^{comb}_{g,n} = \coprod_{\Gamma \,\, \text{of type} \,\, (g,n)} N_B/\aut^{rg}_\partial(\Gamma). \]
\end{definition}

The subspace $\underline{\M}^{comb}_{g,n} \setminus \M^{comb}_{g,n}$ is formed by new cells created by the combinatorial blow-up. It is possible to identified these new cells by combinatorial objects that are generalizations of metric ribbon graphs. These new graphs are in fact more general than the semistable ribbon graphs defined in \cite{zun2015}, but we will not try to define them here in full generality. This will be addressed in future work, here we will be content with a description in terms of truncations given by chains of inclusions.

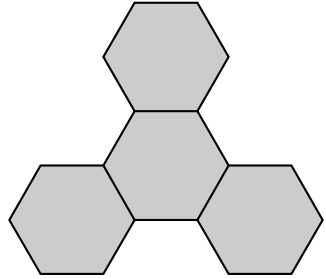
\begin{figure}
\centering
\begin{tikzpicture}[scale=2.5]

\draw[fill=light-gray, thick] (0.167,0.287) -- (0.333,0) -- (0.667,0) -- (0.833,0.287) -- (0.667,0.577) -- (0.333,0.577) -- (0.167,0.866) -- (0.333,1.155) -- (0.167,1.443) -- (-0.167,1.443) -- (-0.333,1.155) -- (-0.167,0.866) -- (-0.333,0.577) -- (-0.667,0.577) -- (-0.833,0.287) -- (-0.667,0)-- (-0.333,0) -- (-0.167,0.287) -- (0.167,0.287);

\draw[thick] (0.167,0.287) -- (0.333,0.577);
\draw[thick] (0.167,0.866) -- (-0.167,0.866);
\draw[thick] (-0.167,0.287) -- (-0.333,0.577);

\end{tikzpicture}
\caption{The orbispace $\underline{\M}^{comb}_{0,3}$.}
\label{compactmodulitheta}
\end{figure}

Figure~\ref{compactmodulitheta} shows $\underline{\M}^{comb}_{0,3}$. In this case all vertices are truncated because the only subgraphs that are unions of trees are single edges. In this case all automorphism groups are trivial. 

\begin{figure}
\centering
\begin{tikzpicture}[scale=2]

\draw[thick, dashed, fill=light-gray] (-0.667,0) -- (-0.833,0.289) -- (-0.167,1.443) -- (0.167,1.443) -- (0.833,0.289) -- (0.667,0) --  (-0.667,0);

\fill[color=gray!60] (-0.5,0.866) -- (-0.167,1.443) -- (0.167,1.443) -- (0.5,0.866) -- (0,0.577);
\draw[thick] (0.167,1.443) -- (-0.167,1.443) -- (-0.5,0.866) -- (0,0.577);
\draw[thick, dashed] (0.5,0.866) -- (0,0.577) -- (0,0);

\draw[->] (0.06,0.45) arc [radius=0.165, start angle=-70, end angle= 250];
\node at (0.4,0.4) {$\Z/ 3\Z$};

\draw[thick, ->] (0.33,1.3) arc [radius=0.44, start angle=-40, end angle= 220];

\end{tikzpicture}
\caption{The orbispace $\underline{\M}^{comb}_{1,1}$.}
\label{ellipticcompactmoduli}
\end{figure}
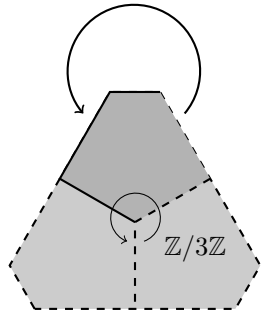

The space $\underline{\M}^{comb}_{1,1}$ can be seen Figure~\ref{ellipticcompactmoduli}. The vertex of the single two-dimensional compact rational cell is truncated. The gluings due to the group actions of $\Z / 3\Z$ and $\Z/ 2 \Z$ work as before, but in this case a boundary isomorphic to a circle is created due to the truncation. The resulting compact moduli space is homeomorphic to a surface minus an open disc.

\section{Quadratic differentials}

\begin{definition}
A \emph{meromorphic quadratic differential} on a Riemann surface $\Sigma$ is a meromorphic section of $(T^* \Sigma)^{\odot 2}$, the second symmetric power of the cotangent bundle.
\end{definition}

 The notions of zero and order of a zero do not depend on the local representation of a quadratic differential. In the same way, the notion of pole and order of a pole are invariant by change of coordinates. Zeros and poles are called \emph{critical points}. A pole of order two of a quadratic differential is called a \emph{double pole} and a pole of order one is called a \emph{simple pole}. Given a representation in local coordinates $f(z)dz^2$ around a double pole $q$, we can express $f$ as \[ f(z) = \frac{a_{-2}}{z^2} + \frac{a_{-1}}{z} + a_0 + \cdots \] and call the term $a_{-2}$ its \emph{quadratic residue}. It can be shown that this number does not depend on the choice of local coordinates. 

Quadratic differentials define certain curves on the Riemann surface. If $q=f(z)dz^2$ is a meromorphic quadratic differential, then the parametric curve $\vec{r}:(a,b) \to C$ is called a \emph{horizontal trajectory} or \emph{leaf} of $q$ if \[ f(\vec{r}(t)) \left( \frac{d\vec{r}(t)}{dt} \right)^2 > 0 \] and a \emph{vertical trajectory} if \[ f(\vec{r}(t)) \left( \frac{d\vec{r}(t)}{dt} \right)^2 < 0. \] 

\begin{definition}
A \emph{Strebel-Jenkins differential} is a meromorphic quadratic differential with only double poles with negative quadratic residues. 
\end{definition}

Strebel-Jenkins differentials have only two kinds of leaves: closed ones (surrounding a double pole) and critical ones (connecting zeroes). The union of critical leaves and zeroes forms the \emph{critical graph}. The vertical trajectories connect the double poles to the critical graph and are orthogonal to the closed leaves under the metric induced by $\sqrt{q}$. The following existence and uniqueness theorem follows from the work of Jenkins and Strebel (see \cite{strebel} and \cite[Theorem 7.6]{loo}).

\begin{figure}
	$$
	\begin{array}{ccc}
		\begin{tikzpicture}[scale=1.75]

\begin{scope}

\clip (0,0) circle (1cm);

\foreach \i in {-3,...,3}
{\draw (-1, \i/4) -- (1, \i/4); }

\foreach \i in {-3,...,3}
{\draw[dashed] (\i/4, -1) -- (\i/4, 1); }

\fill (0,0) circle (1.5pt);

\end{scope}

\draw[thick] (0,0) circle (1cm);

\end{tikzpicture} & \quad \begin{tikzpicture}[scale=1.75]

\draw[thick] (0,0) circle (1cm);
\fill (0,0) circle (1.5pt);

\draw (0,0) -- (1,0);
\draw (0,0) -- (-0.5,0.866);
\draw (0,0) -- (-0.5,-0.866);

\draw[dashed] (0,0) -- (-1,0);
\draw[dashed] (0,0) -- (0.5,0.866);
\draw[dashed] (0,0) -- (0.5,-0.866);

\end{tikzpicture} & \quad \begin{tikzpicture}[scale=1.75]

\draw[thick] (0,0) circle (1cm);
\fill (0,0) circle (1.5pt);

\foreach \i in {1,...,4}
{\draw (0,0) circle (\i/5); }

\draw[dashed] (0,0) -- (1,0);
\draw[dashed] (0,0) -- (-1,0);
\draw[dashed] (0,0) -- (0,1);
\draw[dashed] (0,0) -- (0,-1);
\draw[dashed] (0,0) -- (0.7071,0.7071);
\draw[dashed] (0,0) -- (-0.7071,-0.7071);
\draw[dashed] (0,0) -- (-0.7071,0.7071);
\draw[dashed] (0,0) -- (0.7071,-0.7071);

\end{tikzpicture} \\
		q = dz^2 & \quad q = z dz^2 & \quad q = - \frac{dz^2}{z^2}
	\end{array}
	$$
	\caption{Different behaviors of Strebel-Jenkins differentials. The solid lines represent horizontal trajectories and the dotted one vertical trajectories.}
\end{figure}
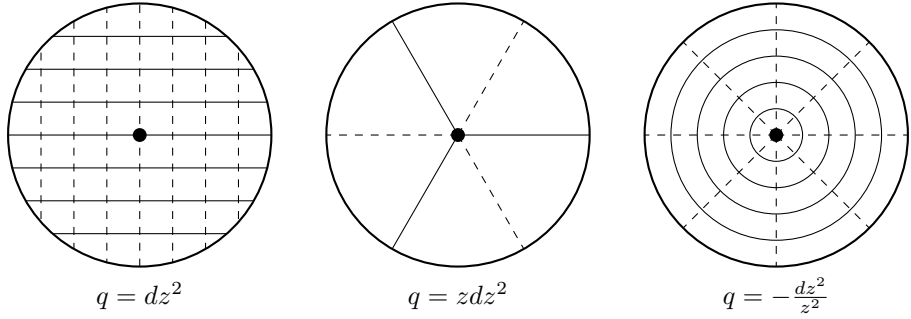

\begin{theorem}[Strebel] \label{strebel}
	Given a Riemann Surface of genus $g$ with $n$ marked points and decorations in $int(\Delta^{n-1})$, there exists a unique quadratic differential with the following properties. It is holomorphic in the puncture surface. The union of closed leaves form semi-infinite cylinders around the marked points. The quadratic residues coincide with the decorations.
\end{theorem}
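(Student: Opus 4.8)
The plan is to treat existence and uniqueness by the classical extremal-length method of Jenkins and Strebel, translating the prescribed decorations into the circumferences of the cylinders one wants to produce. Write $\Sigma$ for the compact surface, $p_1,\dots,p_n$ for the marked points, and let $a_1,\dots,a_n>0$ be the coordinates of the decoration in $\mathrm{int}(\Delta^{n-1})$. A meromorphic quadratic differential $q$ with a double pole of negative quadratic residue at each $p_i$ induces the flat metric $|q|$ in which a punctured neighborhood of $p_i$ is isometric to a semi-infinite cylinder whose circumference is determined, up to a universal constant, by the modulus of the residue (for the model $q=-dz^2/z^2$ the substitution $w=\log z$ gives $q=-dw^2$, a flat half-cylinder of circumference $2\pi$ foliated by the closed horizontal leaves $|z|=\text{const}$). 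Thus the problem is equivalent to producing a flat metric on $\Sigma$, singular only at finitely many points, for which each $p_i$ sits at the end of a semi-infinite cylinder of prescribed circumference $a_i$, the cylinders being swept out by the closed horizontal leaves and meeting along a measure-zero critical graph.

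To produce such a metric I would set up the variational problem of minimizing the total area $\int_\Sigma \rho\,dx\,dy$ over conformal metrics $\rho\,|dz|^2$ subject to the constraint that the loop $\gamma_i$ freely homotopic to a small circle around $p_i$ has $\rho$-length at least $a_i$ for every $i$ (equivalently, maximizing a weighted combination of the moduli of the ring domains about the punctures). First I would establish existence of an extremal metric by a normal-families and lower-semicontinuity argument, and then show, by the standard first-variation computation, that the extremal $\rho$ has the form $|q|$ for a quadratic differential $q$ holomorphic on $\Sigma\setminus\{p_1,\dots,p_n\}$; the constraints being active forces double poles at the $p_i$ and pins down the residues in terms of the $a_i$. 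The extremal property also forces the horizontal foliation of $q$ to be everywhere closed away from the critical graph, so that the surface decomposes into exactly the $n$ cylinders around the punctures, which is the asserted normal form.

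Uniqueness I would derive from the minimal-norm, length-area characterization of the extremal metric: any two Strebel-Jenkins differentials $q_1,q_2$ with the same prescribed residues both solve the same extremal problem, and the strict case of the length-area inequality (the classical Reich--Strebel minimal-norm argument) shows that their flat metrics, hence the differentials themselves, coincide. The main obstacle is the analytic core of the existence step: proving compactness of the minimizing sequence, since the competing metrics must be allowed to develop the infinite cylinders as one approaches the punctures, together with the regularity needed to conclude that the extremal metric is genuinely $|q|$-flat for an honest holomorphic quadratic differential $q$. An alternative route that isolates this difficulty differently is to invoke the Hubbard--Masur correspondence between measured foliations and quadratic differentials, applied to the foliation of $\Sigma$ by circles around the $p_i$ with transverse measures $a_i$; the version required here is the one permitting double poles, so that the resulting cylinders are semi-infinite rather than finite.
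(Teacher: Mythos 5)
The paper does not prove this theorem itself; it cites it as following from the work of Jenkins and Strebel (Strebel's book and Looijenga, Theorem 7.6), and your sketch is precisely the classical extremal-length/minimal-norm argument carried out in those references, with the Hubbard--Masur route as a legitimate alternative. So your proposal matches the approach behind the paper's citation; just be aware that the analytic core you flag (compactness of the minimizing sequence and regularity of the extremal metric) is exactly the substance of the cited proofs rather than a routine step.
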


\section{Moduli spaces and decorations}

Let $\M_{g,n}$ denote the moduli space of Riemann surfaces of genus $g$ and $n$ marked points. The topological type of these surfaces is defined as $(g,n)$. The interior of a Riemann surface of type $(g,n)$ is the result of taking away the marked points, thus obtaining a \emph{punctured} surface. 

Following \cite{liu}, let $\R \M_{(g,b,k), (n,m)}$ denote the moduli of smooth symmetric compact Riemann surfaces with $2n + m$ marked points where a symmetric surface is a pair $(\Sigma, \sigma)$ of a surface $\Sigma$
and $\sigma$ is an antiholomorphic involution.
The index $g$ fixes the genus of $\Sigma$ and $b$ is the number of components of the fixed points of $\sigma$ that is, $\Sigma^\sigma \cong \sqcup_b \S^1$.
The index  $k$ fixes the orientability,  it is $0$ if $\Sigma/ \langle \sigma \rangle$ is orientable and it is $1$ if $\Sigma/ \langle \sigma \rangle$ is not orientable. The symmetry interchanges $n$ pairs of marked points away from $\Sigma^\sigma$ and fixes $m$ marked points in $\Sigma^\sigma$. 

Denote by $\R \M_{g,n,m}$ the moduli space of symmetric surfaces of genus $g$ with $(n,m)$ marked points. This space is a disjoint union of the spaces $\R \M_{(g,b,k), (n,m)}$ ranging over all the different topological types depending on $b$ and $k$. Forgetting the symmetry induces a map $f : \R \M_{g,n,m} \to \M_{g,2n+m}$ that is generically injective, but it fails to be injective when the automorphism group of the marked surface $\Sigma$ is bigger than the automorphism group of the marked symmetric surface $(\Sigma, \sigma)$. 

Following \cite{liu} again, let $\M_{g,b,n,\vec{m}}$ be the moduli space of bordered Riemann surfaces of genus $g$ with $b$ boundary components isomorphic to $\S^1$, $n$ marked points away from the boundary and $\vec{m}$ marked points on the boundary. Here the boundary components are also marked. 

There is an alternative theory in terms of hyperbolic geometry which leads to the same combinatorics, cf. \cite{pennerbordered, KP}.

\begin{figure}
	\centering
\begin{tikzpicture}
	
	\draw [thick] plot [smooth, tension=0.75] coordinates {(2.5,0.6) (1,1) (0,0) (1,-1) (2.5,-0.6)};
	\draw [thick] (1.6,0) arc [radius=0.5, start angle=320, end angle= 220];
	\draw [thick] (1.5,-0.1) arc [radius=0.35, start angle=40, end angle= 140];
	
	\draw [thick] (2.5,-0.6) arc [radius=0.94, start angle=220, end angle=140];
	\draw [thick] (2.5,-0.6) arc [radius=0.94, start angle=320, end angle=400];

	\draw [thick] plot [smooth, tension=0.75] coordinates {(3.5,0.6) (5,1) (6,0) (5,-1) (3.5,-0.6)};
	\draw [thick] (5.1,0) arc [radius=0.5, start angle=320, end angle= 220];
	\draw [thick] (5,-0.1) arc [radius=0.35, start angle=40, end angle= 140];
	
	\draw [thick] (3.5,-0.6) arc [radius=0.94, start angle=220, end angle=140];
	\draw [thick, dashed] (3.5,-0.6) arc [radius=0.94, start angle=320, end angle=400];
	
	\draw[fill=black] (0,0) circle (1.5pt);
	\draw[fill=black] (6,0) circle (1.5pt);
	\draw[fill=black] (2.5,-0.6) circle (1.5pt);
	\draw[fill=black] (3.5,-0.6) circle (1.5pt);
	
\end{tikzpicture}
\caption{The double of a surface of type $(1,1,1,1)$.}
\label{complexdouble}
\end{figure}
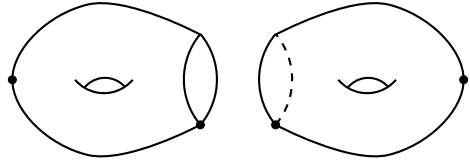

The \emph{complex double} or simply \emph{double} of a surface of topological type $(g,b,n,\vec{m})$ is a surface of topological type $(2g+b-1, 2n+m)$. The doubled surface has a complex structure and a real involution $\sigma$ whose fixed points lie in the boundary of the bordered surface as in Figure~\ref{complexdouble}. By definition, the Euler characteristic of the bordered surface is \[ \chi = 2-2g-b-n-m/2,\] this is half the Euler characteristic of the doubled surface. A doubled surface is in fact symmetric. This observations induces a finite to one map $d: \M_{g,b,n,\vec{m}} \to \R \M_{2g+b-1, n, m}$. 

By considering stable Riemann surfaces with possible nodal singularities one can define the Deligne-Mumford compactifications of these spaces (\cite{abikoff1980}, \cite{seppala1991}, \cite{silhol1992}, \cite{liu}). Moreover, the composition of the double and forgetful map  \[ \ol{\M}_{g,b,n,\vec{m}} \stackrel{d}{\longrightarrow} \ol{\R \M}_{(2g+b-1, b, 0), (n, m)} \stackrel{f}{\longrightarrow} \ol{\M}_{2g+b-1, 2n +m}\] has similar characteristics as before, in particular the double map is generically two to one. 

The KSV compactification $\ul{\M}_{g,n}$ is obtained by decorating the nodes of a stable surface with relative phase parameters \cite{KSV1995}. Geometrically, this is the same as a real tangent direction: a ray in the complex tensor product of the tangent spaces on each side of a node. 

The decorated moduli space of Riemann surfaces of genus $g$ and $n$ marked points is by definition \[ \M_{g,n}^{dec} = \M_{g,n} \times int (\Delta^{n-1}).\] Each marked point is decorated by a positive real number so that the sum of decorations is one. In section 5 of \cite{mupe}, a homeomorphism $\M_{g,n} \times \R^n_+ \to RGB^{met}_{g,n}$ is described. By restricting the domain to the standard simplex, we obtain the map $\varphi : \M^{dec}_{g,n} \to \M^{comb}_{g,n}$. 

This map is realized by Strebel's theory of quadratic differentials. Given a Riemann surface $\Sigma$ with decorations, Strebel's theorem produces a Strebel-Jenkins differential whose critical graph is a metric ribbon graph $\Gamma$. For the inverse, given a metric ribbon graph $\Gamma$, it is possible to give complex charts to $\surf(\Gamma)$ to obtain a Riemann surface $\Sigma$. The decorations of each marked point is the length of the corresponding boundary subgraph. 

\begin{figure}
	\begin{center}
		\includegraphics{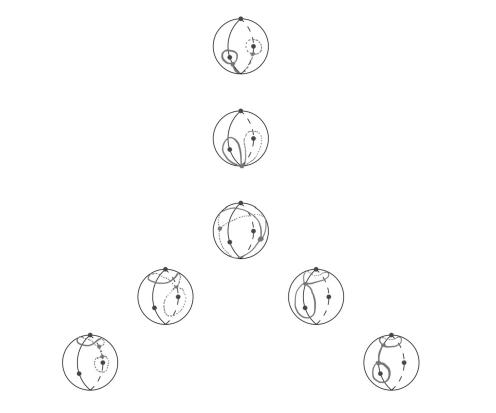}
	\end{center}
	\caption{Critical graphs on a surface of type $(0,3)$ corresponding with the seven rational cells of $\M^{comb}_{0,3}$. The critical graphs appear in a lighter shade of grey on the surface of Riemann spheres with three marked points.}
	\label{degenerationsphere}
\end{figure}

Each rational cell of the moduli space $\M^{comb}_{0,3}$ depicted in Figure~\ref{combmodulitheta} can be represented by the critical graph of a Strebel-Jenkins differential on the Riemann surface with three marked points. This is shown in Figure~\ref{degenerationsphere} and illustrates how the map $\varphi$ works. This last figure also illustrates how the metric ribbon graphs deform as we move around the moduli space. 

\begin{figure}
	\centering
	\begin{tikzpicture}[scale=2.8]
		
		\draw[fill=light-gray, thick] (0.167,0.287) -- (0.333,0) -- (0.667,0) -- (0.833,0.287) -- (0.667,0.577) -- (0.333,0.577) -- (0.167,0.866) -- (0.333,1.155) -- (0.167,1.443) -- (-0.167,1.443) -- (-0.333,1.155) -- (-0.167,0.866) -- (-0.333,0.577) -- (-0.667,0.577) -- (-0.833,0.287) -- (-0.667,0)-- (-0.333,0) -- (-0.167,0.287) -- (0.167,0.287);
		
		\draw[thick] (0.167,0.287) -- (0.333,0.577);
		\draw[thick] (0.167,0.866) -- (-0.167,0.866);
		\draw[thick] (-0.167,0.287) -- (-0.333,0.577);
		
		\node at (0,0.6) {$1$};
		\node at (0,0.88) {$2$};
		\node at (0,1.15) {$3$};
		\node at (0,1.52) {$4$};
		\node at (-0.35,1.3) {$5$};
		\node at (0.35,1.3) {$6$};
		\node at (-0.35,1) {$7$};
		\node at (0.35,1) {$8$};
		
	\end{tikzpicture}
	\caption{Seven compact rational cells in $\underline{\M}^{comb}_{0,3}$ labeled by numbers.}
	\label{modulispheregreen}
\end{figure}

\begin{figure}
	\centering
    \includegraphics{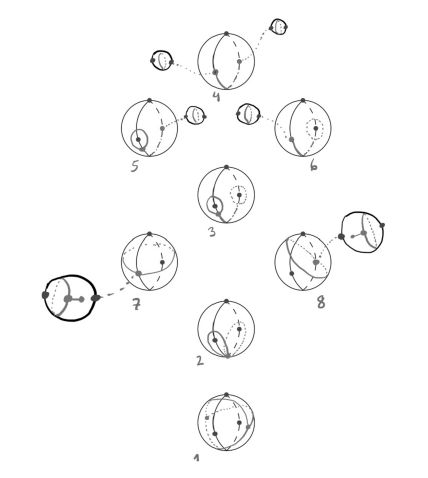}
	\caption{The corresponding seven critical graphs appear inside Riemann spheres with three marked points in a lighter shade of grey.}
\label{criticalgraphsgreen}
\end{figure}

Let us define $\underline{\varphi} : \M^{dec}_{g,n} \to \underline{\M}^{comb}_{g,n}$ by simply changing the codomain of $\varphi$ and notice that the closure of the image of this new map is $\underline{\M}^{comb}_{g,n}$. The seven compact rational cells numbered in Figure~\ref{modulispheregreen} can be represented via $\underline{\varphi}$ by the seven critical graphs numbered in Figure~\ref{criticalgraphsgreen}. This also illustrates the deformations of metric ribbon graphs. The critical trajectories of the sufaces labeled as 4, 5, 6, 7 and 8 correspond with generalizations of the concept of metric ribbon graph. For example, let $a,b \in E(\Gamma)$ be the loops of the double noose in 3. As these loops contract, that is, their lengths $l(a)$ and $l(b)$ go to zero, the ribbon graph degenerates to an interval (no longer a proper metric ribbon graph). However, due to the blow-up, those two lengths are not lost: they form a one dimensional family represented by the two smaller surfaces with two marked points that are \emph{attached} to the degenerated ribbon graph. The one parameter famliy is $\Delta_{a,b}$.

For a bordered Riemann surface of type $(g,b,n,\vec{m})$, let $c_i \ge 0$ with $1 \le i \le n$ denote decorations for the marked points in the interior of the surface and let $o_j \ge 0$ with $1 \le j \le m$ denote decorations for the marked points in the boundary of the surface. Suppose that $2 \sum c_i + \sum o_j =1$. All these decorations can be considered in the interior of the $(n+m-1)$-dimensional simplex $int(\Delta^{n+m-1})$. 

\begin{definition}
The decorated moduli space of bordered surfaces of type $(g,b,n,\vec{m})$ is defined as \[ \M^{dec}_{g,b,n,\vec{m}} = \M_{g,b,n,\vec{m}} \times int(\Delta^{n+m-1}).\] The decorated moduli space of symmetric surfaces of type $(g,b,k)$ and $(n,m)$ marked points is defined as \[ \R \M^{dec}_{(g,b,k),(n,m)} = \R \M_{(g,b,k),(n,m)} \times int( \Delta^{n+m-1} ) \] where we require that the decorations of conjugate marked points be equal. 
\end{definition}

By doubling the decorations of the $n$ interior marked points, we obtain a composition \[ \M^{dec}_{g,b,n,\vec{m}} \stackrel{d}{\longrightarrow} \R \M^{dec}_{(2g+b-1, b, 0), (n, m)} \stackrel{f}{\longrightarrow} \M^{dec}_{2g+b-1, 2n +m}\]  with similar characteristics as before. 

\begin{definition}
Let ${\mathcal S}^{dec}_{g,b,n,\vec{m}} = f \circ d ( \M^{dec}_{g,b,n,\vec{m}} )$. The moduli space of symmetric metric ribbon graphs is defined by \[ {\mathcal S}^{comb}_{g,b,n,\vec{m}} = \varphi ({\mathcal S}^{dec}_{g,b,n,\vec{m}})\] and \[ \ul{{\mathcal S}}^{comb}_{g,b,n,\vec{m}} = cl( \ul{\varphi} ({\mathcal S}^{dec}_{g,b,n,\vec{m}}) ) \] is the moduli space of stable symmetric metric ribbon graphs.
\end{definition}

\begin{theorem}
The combinatorial moduli space $\underline{\mathcal S}^{comb}_{g,b,n,\vec{m}}$ is a compact Hausdorff space and it is a compactification of the decorated moduli space ${\mathcal S}^{dec}_{g,b,n,\vec{m}}$.
\end{theorem}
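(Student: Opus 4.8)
The plan is to realize $\ul{\mathcal S}^{comb}_{g,b,n,\vec m}$ as a closed subspace of the ambient blown-up combinatorial moduli space and to read off each assertion from properties of that ambient space together with the homeomorphism $\varphi$. Write $\hat g = 2g+b-1$ and $\hat n = 2n+m$ for the topological type of the complex double, so that the whole construction takes place inside $\M^{comb}_{\hat g,\hat n}$ and its blow-up $\ul{\M}^{comb}_{\hat g,\hat n}$. Since by definition $\ul{\mathcal S}^{comb}_{g,b,n,\vec m}=cl(\ul\varphi({\mathcal S}^{dec}_{g,b,n,\vec m}))$ already presents the space as a closure inside $\ul{\M}^{comb}_{\hat g,\hat n}$, the three things I would verify are: that the ambient space is compact Hausdorff; that a closure inherits these properties; and that ${\mathcal S}^{dec}_{g,b,n,\vec m}$ sits inside $\ul{\mathcal S}^{comb}_{g,b,n,\vec m}$ as a dense topological subspace.

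First I would record that $\ul{\M}^{comb}_{\hat g,\hat n}$ is compact and Hausdorff. Compactness is immediate from its construction in Section 5: it is a quotient of the compact space $\coprod_\Gamma N_B/\aut^{rg}_\partial(\Gamma)$, which is a \emph{finite} disjoint union (there are finitely many ribbon graphs of type $(\hat g,\hat n)$) of compact rational cells, each of which is a quotient of the compact polytope $N_B$ by a finite group; a continuous image of a compact space is compact. Hausdorffness follows from the orbi-cell structure: the gluings are performed along closed faces and the group actions are by finite, hence proper, groups, which is exactly what makes $\ul{\M}^{comb}_{\hat g,\hat n}$ a compact orbi-cell space. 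Granting this, $\ul{\mathcal S}^{comb}_{g,b,n,\vec m}$ is closed in $\ul{\M}^{comb}_{\hat g,\hat n}$ because it is defined as a closure, and a closed subset of a compact Hausdorff space is compact Hausdorff, which settles the first assertion.

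For the compactification statement I would exhibit a dense embedding of ${\mathcal S}^{dec}_{g,b,n,\vec m}$. By the Harer--Mumford--Thurston--Penner--Bowditch--Epstein theorem, $\varphi\colon\M^{dec}_{\hat g,\hat n}\to\M^{comb}_{\hat g,\hat n}$ is a homeomorphism, so its restriction to the subspace ${\mathcal S}^{dec}_{g,b,n,\vec m}\subset\M^{dec}_{\hat g,\hat n}$ is a homeomorphism onto ${\mathcal S}^{comb}_{g,b,n,\vec m}=\varphi({\mathcal S}^{dec}_{g,b,n,\vec m})\subset\M^{comb}_{\hat g,\hat n}$ in the subspace topologies. (That $f\circ d$ may be non-injective is harmless: we compactify the image ${\mathcal S}^{dec}$ with its subspace topology, not the source $\M^{dec}_{g,b,n,\vec m}$.) Next, the inclusion $\iota\colon\M^{comb}_{\hat g,\hat n}\hookrightarrow\ul{\M}^{comb}_{\hat g,\hat n}$ is an open topological embedding: the blow-up modifies only the boundary faces of each $\Delta_{E(\Gamma)}$ indexed by $B$, that is, those recording the collapse of a subgraph containing a cycle, while the open cells and the tree-collapse gluings that assemble $\M^{comb}_{\hat g,\hat n}$ are left untouched, so $\M^{comb}_{\hat g,\hat n}$ is precisely the open dense complement of the newly created cells. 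Composing, $\ul\varphi|_{{\mathcal S}^{dec}}=\iota\circ\varphi|_{{\mathcal S}^{dec}}$ is a topological embedding of ${\mathcal S}^{dec}_{g,b,n,\vec m}$ into $\ul{\M}^{comb}_{\hat g,\hat n}$ with image inside $\ul{\mathcal S}^{comb}_{g,b,n,\vec m}$, and this image is dense there by the very definition of the latter as its closure. Hence $\ul{\mathcal S}^{comb}_{g,b,n,\vec m}$ is a compactification of ${\mathcal S}^{dec}_{g,b,n,\vec m}$.

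The hard part will be the assertion that $\iota$ is an embedding, equivalently that $\M^{comb}_{\hat g,\hat n}$ is open in $\ul{\M}^{comb}_{\hat g,\hat n}$ and carries the subspace topology there. This is where the combinatorics of Section 5 must be used carefully: one has to check that the face identifications under edge collapse defining $\M^{comb}$ and the truncation-and-gluing defining $\ul{\M}^{comb}$ are compatible, so that no two distinct interior points of $\M^{comb}$ are forced together and no interior point is accumulated only through the new cells. A secondary, related subtlety is the Hausdorffness of the ambient space, which I would reduce to checking that the attaching maps of the blow-up are closed and that the finite automorphism quotients separate points across adjacent cells. Once these two topological facts about $\ul{\M}^{comb}_{\hat g,\hat n}$ are in hand, the statement for $\ul{\mathcal S}^{comb}_{g,b,n,\vec m}$ follows formally.
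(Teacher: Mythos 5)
Your proposal is correct and follows essentially the same route as the paper: compactness and Hausdorffness of the ambient blown-up space $\ul{\M}^{comb}_{2g+b-1,2n+m}$ from its finite decomposition into compact rational cells, closedness of $\ul{{\mathcal S}}^{comb}_{g,b,n,\vec m}$ as a closure inside it, and density of the image of ${\mathcal S}^{dec}_{g,b,n,\vec m}$ via the homeomorphism $\ul{\varphi}$. You are in fact more explicit than the paper about the two points its one-line argument glosses over (Hausdorffness of the glued orbi-cell space and the fact that $\M^{comb}_{2g+b-1,2n+m}\hookrightarrow\ul{\M}^{comb}_{2g+b-1,2n+m}$ is an open embedding), which you correctly identify as the substantive verifications.
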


\begin{proof}
The combinatorial moduli space $\underline{\mathcal S}^{comb}_{2g+b-1,2n+m}$ is a compact, Hausdorff space since it is a finite union of compact rational cells. This implies that $\underline{\mathcal S}^{comb}_{g,b,n,\vec{m}}$ is a compact Hausdorff space. Moreover, since \[ \underline{\mathcal S}^{comb}_{2g+b-1,2n+m} \setminus {\mathcal S}^{comb}_{2g+b-1,2n+m} \] is made of lower dimensional rational faces and $\ul{\varphi}$ is a homeomorphism, then the decorated space ${\mathcal S}^{dec}_{g,b,n,\vec{m}}$ compactifies to $\underline{\mathcal S}^{comb}_{g,b,n,\vec{m}}$
\end{proof}
 
 The real dimension of this moduli space is $6g+3b+3n+2m-7$. This follow by counting dimensions of the bordered moduli space and the simplex defining the decorations. The lowest dimensional examples with boundary and negative Euler characteristic are shown in Table~\ref{lowdimtab}.
 
 \begin{table}
 \caption{Euler characteristic of a bordered surfaces of the corresponding topological type and dimension of the corresponding combinatorial moduli space.}
 \label{lowdimtab}
 $\begin{array}{cccc|c|c}
g & b & n & m & \chi & \dim \\
\hline
\hline  
0 & 1 & 0 & 3 & -1/2 & 2 \\ 
0 & 1 & 1 & 1 & -1/2 & 1 \\ 
0 & 1 & 1 & 2 & -1 & 3 \\  
0 & 1 & 2 & 0 & -1 & 2 \\ 
\hline
0 & 2 & 0 & 1 & -1/2 & 1 \\
0 & 2 & 0 & 2 & -1 & 3 \\
0 & 2 & 1 & 0 & -1 & 2 \\
\end{array}$
\end{table}

It would be interesting to define $\underline{\mathcal S}^{comb}_{g,b,n,\vec{m}}$ in a purely combinatorial way. That would involve a combinatorial notion of bordered ribbon graphs and their semistable versions.

\section{A few examples}

\begin{figure}
\centering
\begin{tikzpicture}[scale=1.8]
	
	\draw[thin, dashed, fill=light-gray] (-1,0) -- (0,1.732) -- (1,0) -- (-1,0);
	\draw[thin] (0,0) -- (-0.5,0.866) -- (0.5,0.866) -- (0,0);
	
	\draw[line width=0.5mm] (0,0) -- (0,1.732);
	
	\draw[fill=white] (0,0) circle (1.2pt);
	\draw[fill=white] (-1,0) circle (1.2pt);
	\draw[fill=white] (1,0) circle (1.2pt);
	\draw[fill=white] (-0.5,0.866) circle (1.2pt);
	\draw[fill=white] (0.5,0.866) circle (1.2pt);
	\draw[fill=white] (0,1.732) circle (1.2pt);
	
\end{tikzpicture}
\hspace{2cm}
\begin{tikzpicture}[scale=2]
	
	\draw[fill=light-gray, thin] (0.167,0.287) -- (0.333,0) -- (0.667,0) -- (0.833,0.287) -- (0.667,0.577) -- (0.333,0.577) -- (0.167,0.866) -- (0.333,1.155) -- (0.167,1.443) -- (-0.167,1.443) -- (-0.333,1.155) -- (-0.167,0.866) -- (-0.333,0.577) -- (-0.667,0.577) -- (-0.833,0.287) -- (-0.667,0)-- (-0.333,0) -- (-0.167,0.287) -- (0.167,0.287);
	
	\draw[thin] (0.167,0.287) -- (0.333,0.577);
	\draw[thin] (0.167,0.866) -- (-0.167,0.866);
	\draw[thin] (-0.167,0.287) -- (-0.333,0.577);
	
	\draw[very thick] (0,0.289) -- (0,1.443);
	
	\fill[black] (0,0.289) circle (1pt);
	\fill[black] (0,0.866) circle (1pt);
	\fill[black] (0,1.443) circle (1pt);
	
\end{tikzpicture}
\caption{The spaces ${\mathcal S}^{comb}_{0,1,1,1}$ and $\underline{\mathcal S}^{comb}_{0,1,1,1}$ are depicted in the middle with thicker lines.}
\label{modulisphere}
\end{figure}
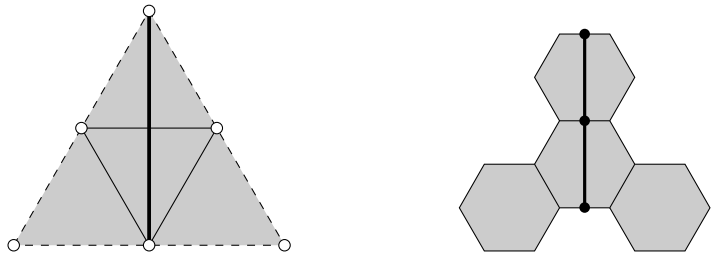

The moduli space ${\mathcal S}^{comb}_{0,1,1,1}$ sits inside $\M^{comb}_{0,3}$ and must be one dimensional. Considering symmetric versions of the theta graph, figure eight and double noose, Figure~\ref{modulisphere} shows on the left ${\mathcal S}^{comb}_{0,1,1,1}$ represented by the thicker subspace in the middle of the triangle. On the right of  Figure~\ref{modulisphere} the space $\underline{\mathcal S}^{comb}_{0,1,1,1}$ is depicted as the thicker subspace of $\underline{\M}^{comb}_{0,3}$. The compact combinatorial moduli space consists of three 0-dimensional cells and two 1-dimensional cells.

\begin{figure}
	\centering
\includegraphics{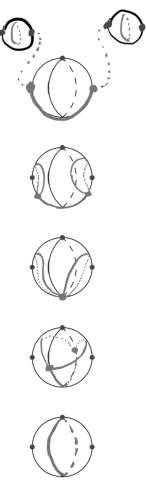}
\caption{Critical graphs corresponding with the compact rational cells of $\underline{\mathcal S}^{comb}_{0,1,1,1}$. The critical graphs appear inside the Riemann with three marked points spheres in a lighter shade of grey.}
\label{modulisphereone}
\end{figure}

Figure~\ref{modulisphereone} shows the five critical graphs corresponding to the five compact rational cells of the space $\underline{\mathcal S}^{dec}_{0,1,1,1}$.

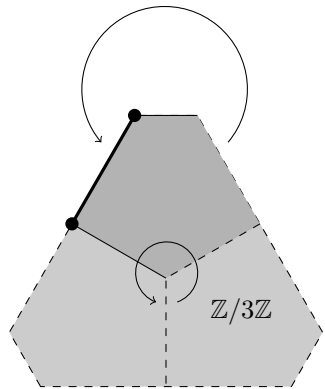
\begin{figure}
	\centering
	\begin{tikzpicture}[scale=2.5]
		
		\draw[thin, dashed, fill=light-gray] (-0.667,0) -- (-0.833,0.289) -- (-0.167,1.443) -- (0.167,1.443) -- (0.833,0.289) -- (0.667,0) --  (-0.667,0);
		
		\fill[color=gray!60] (-0.5,0.866) -- (-0.167,1.443) -- (0.167,1.443) -- (0.5,0.866) -- (0,0.577);
		\draw[thin] (0.167,1.443) -- (-0.167,1.443) -- (-0.5,0.866) -- (0,0.577);
		\draw[thin, dashed] (0.5,0.866) -- (0,0.577) -- (0,0);
		
		\draw[->] (0.06,0.45) arc [radius=0.165, start angle=-70, end angle= 250];
		\node at (0.4,0.4) {$\Z/ 3\Z$};
		
		\draw[thin, ->] (0.33,1.3) arc [radius=0.44, start angle=-40, end angle= 220];
		
		\draw[very thick] (-0.167,1.443) -- (-0.5,0.866);
		\fill[black] (-0.167,1.443) circle (1pt);
		\fill[black] (-0.5,0.866) circle (1pt);
		
	\end{tikzpicture}
	\caption{The space $\underline{\mathcal S}^{comb}_{0,2,0,1}$ is the thicker edge with two univalent vertices.}
	\label{modulicombtorus}
\end{figure}

\begin{figure}
	\centering
	\includegraphics{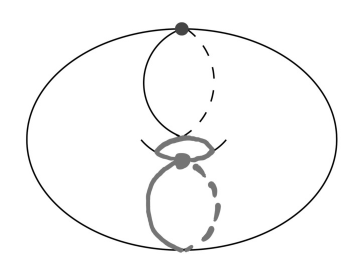}
	\caption{Critical graph in $\underline{\mathcal S}^{comb}_{0,2,0,1}$. The critical graph is the twisted figure eight on the bottom-center of the torus.}
	\label{criticalgraphtorusone}
\end{figure}

The space $\underline{\mathcal S}^{comb}_{0,2,0,1}$ is one dimensional and sits inside $\underline{\M}^{comb}_{1,1}$, this is depicted by the thicker line in Figure~\ref{modulicombtorus}. The moduli space $\underline{\M}^{comb}_{0,2,0,1}$ consists of two 0-dimensional compact rational cells and one 1-dimensional compact rational cell. A typical symmetric critical graph in $\underline{\M}^{comb}_{1,1}$ describing a point in $\underline{\mathcal S}^{comb}_{0,2,0,1}$ is shown in Figure~\ref{criticalgraphtorusone}.

\begin{figure}
	\centering
\includegraphics{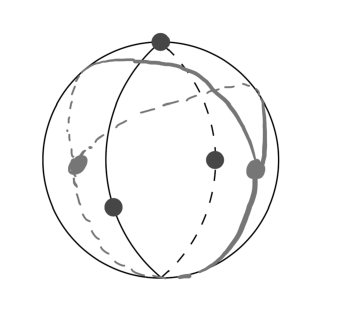}
	\caption{Critical graph representing a point in $\underline{\mathcal S}^{comb}_{0,1,0,3}$. The critical graph is the theta graph inside the sphere represented with a lighter shade of grey.}
\label{spherecritical}
\end{figure}

Figure~\ref{spherecritical} shows a theta graph as a symmetric critical trajectory in $\underline{\M}^{comb}_{0,3}$. In fact, all graphs of type $(0,3)$ can be placed in the double of the disc with three marked points on the boundary as symmetric graphs. This implies that $\underline{\M}^{comb}_{0,1,0,3} \cong \underline{\M}^{comb}_{0,3}$ whose picture can be found in Figure~\ref{compactmodulitheta}.

\begin{figure}
	\centering
	\includegraphics{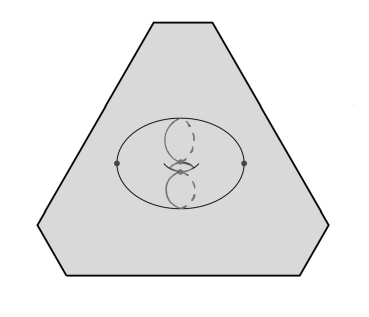}
	\caption{The space $\underline{\mathcal S}^{comb}_{0,2,1,0}$ and the critical graph representing the interior. The critical graph is represented by three loops on the middle of the torus in a lighter shade of grey.}
	\label{torustwopoints}
\end{figure}

$\underline{\mathcal S}^{comb}_{0,2,1,0}$ consists of only one 2-dimensional cell as shown in Figure~\ref{torustwopoints}. That figure also shows the double of annulus with one marked interior point, that is, the symmetric torus with two marked points. In the torus we see the critical trajectory right in the middle which is a ribbon graph of type $(1,2)$.

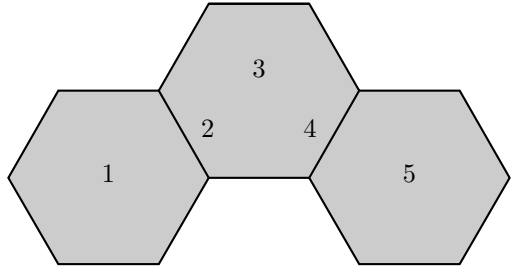
\begin{figure}
	\centering
\begin{tikzpicture}[scale=4]
	
	\draw[fill=light-gray, thick] (0.167,0.287) -- (0.333,0) -- (0.667,0) -- (0.833,0.287) -- (0.667,0.577) -- (0.333,0.577) -- (0.167,0.866) -- (-0.167,0.866) -- (-0.333,0.577) -- (-0.667,0.577) -- (-0.833,0.287) -- (-0.667,0)-- (-0.333,0) -- (-0.167,0.287) -- (0.167,0.287);
	
	\draw[thick] (0.167,0.287) -- (0.333,0.577);
	\draw[thick] (0.167,0.866) -- (-0.167,0.866);
	\draw[thick] (-0.167,0.287) -- (-0.333,0.577);
	
	\node at (-0.5,0.3) {1};
	\node at (-0.17,0.45) {2};
	\node at (0,0.65) {3};
	\node at (0.17,0.45) {4};
	\node at (0.5,0.3) {5};
	
\end{tikzpicture}
	\caption{The space $\underline{\mathcal S}^{comb}_{0,1,2,0}$ The numbers label different racional cells.}
\label{spherefourpoints}
\end{figure}

\begin{figure}
	\centering
	\includegraphics{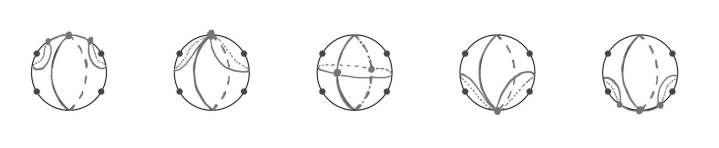} \par
	\vspace{-0.5cm}
	{1 \hspace{2.05cm} 2 \hspace{2.05cm} 3 \hspace{2.05cm} 4 \hspace{2.05cm} 5}
	\caption{Critical trajectories in a symmetric sphere with four marked points. The numbers correspond with the rational cells in Figure~\ref{spherefourpoints}.}
	\label{spherefourpointstra}
\end{figure}

The space $\underline{\mathcal S}^{comb}_{0,1,2,0}$ is formed by three 2-dimensional cells as depicted in Figure~\ref{spherefourpoints}.  In that figure the 2-dimensional cells are labeled together with two additional 1-dimensional cells. They correspond with the critical graphs in Figure~\ref{spherefourpointstra}. The double of the disc with two interior marked points is the symmetric Riemann sphere with four marked points.

\section{Relations to known spaces}

Ideally, we would like to construct orbicell decompositions for the moduli spaces $\R \M^{dec}_{(2g+b-1,b,0),(n,m)}$. However since the forgetful map $f$ is not injective in general, it is not possible to do so in a straightforward manner. For $g=0$ and $b=1$, the forgetful map is injective since the moduli spaces for Riemann spheres are fine (see \cite{ceyhan2006}). The genus one examples in the previous section are also decompositions for the corresponding real moduli spaces because the real and complex automorphism group of an elliptic curve are equal, thus the injectivity of the forgetful map follows (see \cite{silhol1992}). 

A possible way forward would be to study the difference between complex and real automorphisms of a real curve. As suggested by the referee, one can consider the cone of the morphism \[ \Psi_* : C_\bullet (\M_{g,b,n,\vec{m}}, \Q) \to C_\bullet (\M_{2g+b-1, 2n+m} ,\Q) \cong C_\bullet (\M^{comb}_{2g+b-1,2n+m}, \Q) \] The complex on the lefthand side admits a combinatorial model in terms of BW ribbon graphs. The complex on the right admits a model in terms of classical ribbon graphs. The corresponding cone can be considered as the combinatorial tool to detect the differences between complex and real isomorphisms of the double of a bordered surface.

There is also a connection between $\ul{S}^{comb}_{g,b,n,\vec{m}}$ and $\ul{\M}_{2g+b-1,2n+m}$. It should be possible to construct a non-surjective map \[ \pi : \ul{S}^{comb}_{g,b,n,\vec{m}} \to \ul{\M}_{2g+b-1,2n+m} \] which in the interior corresponds with $\varphi^{-1}$ and forgetting the decorations by real numbers. On the boundary, the construction of the stable surface can be achieved by considering sequences of nested subgraphs as in \cite{zun2015}. The decorations by real tangent directions at the nodes can be obtained by remembering how the subgraphs degenerate. The exact nature of this map and its fibers needs further investigation.

In \cite{cos:dpv} a combinatorial model $D_{g,b,m,n}$ is introduced formed by BW graphs as defined in \cite{WW2016}. This complex is weakly homotopic to a partial compactification of the moduli space $\M_{g,b,n,\vec{m}}$. It is interesting to note a relation between BW graphs and the graphs presented here. The first three examples in the previous section have marked points in the boundary and one can obtain a BW graph choosing half of one of the critical trajectories. 

\begin{figure}
	\centering
\begin{tikzpicture}[scale=1.2]
	
\draw[very thick] (-1,-0.15) -- (-1,0.65);
\draw[fill=white, thick] (-1,0.65) circle (2.5pt);

\draw[very thick] (2,0.5) circle (0.5cm);
\draw[fill=black] (2,0) circle (2.5pt);
\draw[very thick] (2,0) -- (2,-0.5);

\draw[fill=black] (5,0) circle (2.5pt);
\draw[very thick] (5,0) -- (5,1);
\draw[very thick] (5,0) -- (5.866,-0.5);
\draw[very thick] (5,0) -- (4.134,-0.5);
	
\end{tikzpicture}
	\caption{BW graphs in $D_{0,1,1,1}$, $D_{0,2,1,0}$, and $D_{0,1,3,0}$ respectively.}
\label{bwgraphs}
\end{figure}
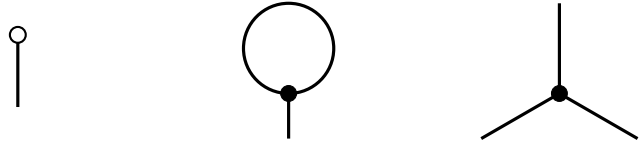

At the top of Figure~\ref{modulisphereone} there is a graph consisting of an edge with two vertices and two extra circles. One half of the single edge (say the left half) gives the first BW graph in Figure~\ref{bwgraphs} with only one white vertex corresponding with the interior marked point and one half-edge corresponding to the boundary marked point. Notice that this graph only has one boundary cycle since $b=1$. Figure~\ref{criticalgraphtorusone} shows the twisted figure eight as a critical graph. One half of this graph (the left half) gives the second BW graph on Figure~\ref{bwgraphs} with two boundary cycles since $b=2$, one black trivalent vertex, and one half-edge since $m=1$. There are no white vertices since $n=0$. Finally, Figure~\ref{spherecritical} shows the theta graph as a critical graph. Again, one half of this graph (the left half again) gives the third BW graph in Figure~\ref{bwgraphs} with only one boundary cycle since $b=1$, three half-edges since $m=1$, and no white vertex since $n=0$. For the last two examples $m=0$ and the same idea can not be applied. However, for the critical trajectory in Figure~\ref{torustwopoints}, after contracting the middle loop, there is a dual graph in the left half of the torus: a loop with one edge and one white vertex at the interior marked points. This BW graph has two boundary cycles since $b=2$. For the critical graph marked as ``3'' in Figure~\ref{spherefourpointstra}, after contracting the middle horizontal loop, there is a dual BW graph in the left half with two white vertices at the interior marked points joined by a single edge. This seems to indicate a way to extract BW graphs in $D_{g,b,m,n}$ from $\ul{S}^{comb}_{g,b,n,\vec{m}}$ in a similar way in which one chooses a bordered Riemann surface out of a symmetric one.

\section{Acknowledgment}

We would like to thank the referee for pointing out several shortcomings of the initial version of this article and also for suggesting the way for improving it. His contributions made this work more precise, organized and readable. 

\bibliographystyle{amsalpha}
\bibliography{cmbs}

\end{document}